\documentclass{article}
\usepackage[utf8]{inputenc}
\usepackage{amsmath,amsfonts,amssymb,amsthm}
\usepackage{xurl}
\usepackage{hyperref}
\usepackage{cleveref}
\usepackage{biblatex}

\newtheorem{theorem}{Theorem}[section]
\newtheorem{lemma}[theorem]{Lemma}
\newtheorem{corollary}[theorem]{Corollary}

\newtheorem{question}{Question}
\newtheorem{definition}{Definition}
\newtheorem*{thmA}{Theorem A}

\DeclareMathOperator{\MLPS}{\mathsf{MLPS}}
\DeclareMathOperator{\MLRC}{\mathsf{MLRC}}
\DeclareMathOperator{\LRC}{\mathsf{LRC}}

\DeclareMathOperator*{\lcm}{lcm}
\DeclareMathOperator{\RR}{\mathbb{R}}
\DeclareMathOperator{\ZZ}{\mathbb{Z}}
\DeclareMathOperator{\ZZp}{\mathbb{Z}^+}
\DeclareMathOperator{\sgn}{sgn}
\DeclareMathOperator{\safe}{safe}
\DeclareMathOperator{\unsafe}{unsafe}

\title{Mixed thresholds in the\\Lonely Runner Conjecture}
\author{Alathea Jensen}

\begin{document}

\maketitle

\begin{abstract}
The Lonely Runner Conjecture states that if \(k+1\)
runners start at the same point on a unit-length circular track and run
with distinct constant speeds, then each runner is at some time at least \(1/(k+1)\)-distant from every other runner.  Equivalently, for every tuple of
\(k\) distinct positive integer speeds \(s_1,\ldots,s_k\), there is a real
number \(t\) such that
\[
    \|s_i t\|\geq \frac{1}{k+1}
    \qquad\text{for all }i.
\]

We introduce and study a version of the conjecture in which the
required distances may vary with \(i\).  For
\(\mathbf d=(d_1,\ldots,d_k)\in(0,1/2]^k\), let \(\MLPS_k\) be the set of
vectors such that, for every choice of distinct positive integer speeds
\(s_1,\ldots,s_k\), there is a real number \(t\) with
\[
    \|s_i t\|\geq d_i
    \qquad\text{for all }i.
\]
We give an exact characterization of \(\MLPS_2\).  We also use Fourier series for distance-threshold indicator functions to obtain an arithmetic progression summation formula and an exact
two-function integral formula for unequal thresholds.

\end{abstract}

\section{Introduction}

The Lonely Runner Conjecture is a classical problem in Diophantine approximation, originating in work of Wills and closely related to Cusick's view-obstruction formulation \cite{Wills1967,Cusick1974ViewObstructionGeometry,PerarnauSerraSurvey}.  In its classical formulation, it says that if \(n\) runners start at a common point on a circular track of length \(1\), each running with constant speed, then each runner is eventually lonely, meaning that there is a time at which that runner is at least \(1/n\) from every other runner, measured along the track.

After subtracting the speed of the runner under consideration, one obtains the following equivalent formulation.  Let
\[
    \|x\|:=\min_{m\in \ZZ}|x-m|
\]
denote the distance from \(x\) to the nearest integer.

Then the Lonely Runner Conjecture claims that for every ordered tuple of \(k\) nonzero integer speeds \(s_1,\ldots,s_k\), there exists a time \(t\in\RR\) such that
\[
    \|s_i t\|\geq \frac{1}{k+1}
    \qquad\text{for all }i=1,\ldots,k.
\]
This is the form of the conjecture that we will use throughout the paper.  We also say that a runner with speed \(s\) is \textbf{\(\delta\)-distant from the origin} or simply, \textbf{$\delta$-safe}, at time \(t\) if \(\|st\|\geq \delta\).

The Lonely Runner Conjecture has several equivalent formulations and has been studied from many points of view, including Diophantine approximation, view obstruction, chromatic numbers of distance graphs, covering radii of zonotopes, and nowhere-zero flows.  For a recent survey of these connections, see Perarnau and Serra \cite{PerarnauSerraSurvey}.  The case of seven total runners was proved by Barajas and Serra, using methods connected to distance graphs \cite{BarajasSerraSeven}.

Tao showed that the conjecture can be reduced to a finite computation, and Malikiosis, Santos, and Schymura later made that finite check substantially smaller \cite{TaoRemarks,MalikiosisSantosSchymuraFiniteChecking}.  Rosenfeld proved the eight-runner case \cite{RosenfeldEight}, Trakulthongchai proved the nine- and ten-runner cases \cite{TrakulthongchaiNineTen}, and Sungkawichai and Trakulthongchai announced the eleven-, twelve-, and thirteen-runner cases \cite{SungkawichaiTrakulthongchaiElevenTwelveThirteen}.

The usual Lonely Runner Conjecture asks for a common distance bound of \(1/(k+1)\).  This paper studies a variant in which the required distances are allowed to vary from runner to runner.\footnote{The author likes to imagine that some runners are more loquacious, so that the stationary runner doesn't feel lonely until they are further away, while other runners are more taciturn, so the stationary runner can feel lonely even when they are quite close.  This has nothing to do with the math, but it is an amusing extension of the loneliness metaphor.} Given an ordered vector
\[
    \mathbf d=(d_1,\ldots,d_k)\in (0,1/2]^k,
\]
we say that \(\MLRC(\mathbf d)\) holds if, for every choice of distinct positive integer speeds \(s_1,\ldots,s_k\), there exists a time \(t\in\RR\) such that
\[
    \|s_i t\|\geq d_i
    \qquad\text{for all }i=1,\ldots,k.
\]
We call this a mixed lonely runner statement.  Thus the classical Lonely Runner Conjecture for \(k+1\) total runners is the special case
\[
    \MLRC\left(\frac{1}{k+1},\ldots,\frac{1}{k+1}\right).
\]

The order of the coordinates in \(\mathbf d\) is not mathematically important, since the speeds may be relabeled.  It is nevertheless useful to treat \(\mathbf d\) as an ordered vector rather than a set, because this lets us assign a specific distance requirement to a specific speed during a proof.  We define the \(k\)-dimensional mixed loneliness parameter space by
\[
    \MLPS_k
    :=
    \left\{
        \mathbf d\in (0,1/2]^k:
        \MLRC(\mathbf d)\text{ holds}
    \right\}.
\]

One way to motivate this definition is through the view-obstruction formulation of the Lonely Runner Conjecture.  The motion of runners with speeds \(s_1,\ldots,s_k\) is described by the line
\[
    t\mapsto (s_1t,\ldots,s_kt)
\]
in the torus \(\RR^k/\ZZ^k\).  The usual Lonely Runner Conjecture asks whether this line must enter the cube
\[
    \left[\frac{1}{k+1},\frac{k}{k+1}\right]^k.
\]
Equivalently, in \(\RR^k\), the LRC asks whether every line through the origin in an integer direction must meet one of the translated copies of this cube.  The mixed version of LRC replaces the cube by an orthotope.  Namely, \(\MLRC(d_1,\ldots,d_k)\) asks whether every such line meets one of the regions
\[
    [d_1,1-d_1]\times \cdots \times [d_k,1-d_k]+\ZZ^k.
\]
Thus mixed loneliness is a generalization of the classical cube obstruction problem to orthotopes.

This generalization arose from an attempt to prove $\LRC(8)$ analytically.  The author was developing a formula for sums of indicator functions over arithmetic progressions of times, and found that the result can naturally be interpreted in the context of an indicator function with a different distance threshold.

In a forthcoming paper we prove the following mixed statements:
\[
    \MLRC\left(\frac14,\frac14,\frac18,\frac18\right),\qquad
    \MLRC\left(\frac14,\frac18,\frac18,\frac18,\frac18\right).
\]
These results were originally intended to facilitate case reductions for an analytic approach to proving \(\LRC(8)\).  While the eight-runner case and further cases are now proven by computer-assisted methods, these sorts of mixed statements are still interesting in their own right and may assist in future case reductions for other instances of the LRC.

This paper develops the basic language and tools for mixed loneliness.  We introduce a family of indicator functions that record when a runner is \(\delta\)-distant from the origin, and we use Fourier series to prove an arithmetic progression summation formula for these functions.  The summation formula describes how often a runner is \(\delta\)-distant from the origin along a finite arithmetic progression of times
\[
    t,\quad t+\frac1m,\quad t+\frac2m,\quad \ldots,\quad t+\frac{m-1}{m}.
\]
This type of summation is especially useful in proving cases of the LRC where a subset of speeds has a common divisor.

We also prove an exact formula for the integral of two indicator functions with different distance thresholds.  This generalizes a pairwise-intersection formula of Perarnau and Serra \cite{PerarnauSerraCorrelation}, who computed the measure of
\[
    \{t\in[0,1):\|v_i t\|<\delta\}
    \cap
    \{t\in[0,1):\|v_j t\|<\delta\}
\]
for two speeds \(v_i,v_j\) and a common distance threshold \(\delta\).  Our formula allows the two thresholds to differ.  Equivalently, it computes the measure of the set of times at which two runners are simultaneously \(\delta_1\)-distant and \(\delta_2\)-distant from the origin:
\[
    \{t\in[0,1):\|a t\|\geq \delta_1\}
    \cap
    \{t\in[0,1):\|b t\|\geq \delta_2\},
\]
where \(a,b\in\ZZ^+\) and \(\delta_1,\delta_2\in(0,1/2]\).

Our main theorem is the following.

\begin{thmA}
\label{thm:MLPS2}
The mixed loneliness parameter space \(\MLPS_2\) is exactly
\[
    \MLPS_2
    =
    \left\{
        (d_1,d_2)\in(0,1/2]^2:
        2d_1+d_2\leq 1
        \text{ and }
        d_1+2d_2\leq 1
    \right\}.
\]
\end{thmA}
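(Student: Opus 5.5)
We prove the two inclusions separately. The inequalities come from particular speed pairs that are obstructions, and the converse is a direct construction of a good time for every pair of distinct speeds.

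\emph{Necessity.} Take speeds $(s_1,s_2)=(1,2)$. We need a $t$ with $\|t\|\ge d_1$ and $\|2t\|\ge d_2$. By symmetry ($t\mapsto -t$, $t\mapsto t+1$) we may take $t\in[d_1,1/2]$. The condition $\|2t\|\ge d_2$ holds exactly when $2t\in[d_2,1-d_2]$, that is, $t\le (1-d_2)/2$, because $2t\le 1$ in this range. Such a $t$ exists iff $d_1\le (1-d_2)/2$, i.e.\ $2d_1+d_2\le 1$. Swapping the roles with speeds $(s_1,s_2)=(2,1)$ gives $d_1+2d_2\le 1$. So both inequalities are necessary.

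\emph{Sufficiency.} Fix $(d_1,d_2)$ satisfying both inequalities and distinct positive speeds $a=s_1$, $b=s_2$.

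First reduce to $\gcd(a,b)=1$. Dividing both speeds by $g=\gcd(a,b)$ and rescaling time by $t\mapsto t/g$ preserves the problem. The reduced speeds stay distinct, and the case $\{a,b\}=\{1,2\}$ is handled exactly by the necessity computation run in reverse.

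For general coprime $a\ne b$, I would use the classical two-runner argument. The map $t\mapsto(at,bt)$ on $[0,1)$ traces a closed line on the torus $\RR^2/\ZZ^2$. We need it to meet the rectangle $R=[d_1,1-d_1]\times[d_2,1-d_2]$.

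A cleaner route is the discrete one. Choose $t=j/(a+b)$. Since $a\equiv -b \pmod{a+b}$, we get $\|at\|=\|bt\|=\|aj/(a+b)\|$. As $a$ is invertible modulo $a+b$, the values $aj \bmod (a+b)$ run over all residues. So we can make $\|at\|=\|bt\|=\lfloor (a+b)/2\rfloor/(a+b)\ge 1/3$ once $a+b\ge 3$, which covers every coprime distinct pair. This settles all cases with $\max(d_1,d_2)\le 1/3$.

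The real difficulty is the region where one threshold is large, say $d_1>1/3$ and then $d_2\le 1-2d_1<1/3$. Here I would fix $at$ inside the large safe window $[d_1,1-d_1]$. The corresponding set of times is a union of $a$ intervals, each of length $(1-2d_1)/a$, with one interval centred at each point $(2k+1)/(2a)$.

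On these intervals $bt$ moves by $b(1-2d_1)/a$ around the circle, starting from the points $b(2k+1)/(2a)$. When $b>a$, each single interval already has $bt$ sweeping a length of at least $1-2d_1\ge d_2\ldots$ One checks that an arc of length $\ge 2d_2$ centred appropriately meets $[d_2,1-d_2]$. The precise requirement is that the window $[d_2,1-d_2]$ of length $1-2d_2$, together with the arc of length $b(1-2d_1)/a$, cannot fit inside the complementary arc of length $2d_2$. That is guaranteed if $b(1-2d_1)/a\ge 2d_2$ or if the centres avoid the bad arc; $b\ge a+1$ together with $1-2d_1\ge d_2$ should suffice except when $a=1$.

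When $b<a$, use the centres instead. The points $b(2k+1)/(2a)$, for $k=0,\ldots,a-1$, are spaced $1/a$ apart modulo $1$ because $\gcd(a,b)=1$. They are the odd multiples of $1/(2a)$, so one of them lies within $1/(2a)$ of $1/2$. That gives $\|bt\|\ge 1/2-1/(2a)$, and this is $\ge d_2$ whenever $a\ge 2$, since $d_2\le 1/3$ forces $1/2-1/(2a)\ge 1/4$… This needs $a\ge 3$; the small cases $a\le 2$ reduce to $(1,2)$ or $(2,1)$, which are handled directly.

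I expect this case bookkeeping for small speeds and the boundary of the region to be the main obstacle. I would organize it by treating $\min(a,b)\le 2$ explicitly and using the centre-spacing argument otherwise.
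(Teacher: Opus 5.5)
Your necessity argument, the reduction to $\gcd(a,b)=1$, and the choice $t=j/(a+b)$ (which settles the square $(0,1/3]^2$) are all fine. The gap is in the remaining region $d_1>1/3$, $d_2\le 1-2d_1<1/3$, specifically your subcase $b>a$, which is never proved. Knowing only that $bt$ sweeps an arc of length $b(1-2d_1)/a>d_2$ is not enough. An arc shorter than $2d_2$ can sit entirely inside the unsafe arc $\{x:\|x\|<d_2\}$, so you need either length at least $2d_2$ or information about where the arc is centred. The length criterion $b(1-2d_1)/a\ge 2d_2$ does not follow from $b\ge a+1$ and $1-2d_1\ge d_2$: for $a=2$, $b=3$, $(d_1,d_2)=(0.45,0.1)$ the arc has length $0.15<0.2$. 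Your stated exception is also backwards, since $a=1$ is exactly the case where the length criterion \emph{does} hold: $b(1-2d_1)\ge 2(1-2d_1)\ge 2d_2$. As written, ``should suffice'' leaves infinitely many speed pairs unhandled.

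The missing observation is that your centre-spacing argument never used $b<a$. For every $b$ coprime to $a$, the points $b(2k+1)/(2a)$, $k=0,\ldots,a-1$, form a full coset of $\frac1a\ZZ$ modulo $1$. They are odd multiples of $1/(2a)$ when $b$ is odd, but multiples of $1/a$ when $b$ is even, so your description is wrong in that case, though the spacing claim survives. Hence some centre of an $a$-safe interval has $\|bt\|\ge\frac12-\frac1{2a}\ge\frac13>d_2$ as soon as $a\ge3$, whatever $b$ is. That leaves only $a=1$ and $a=2$. The case $a=1$ follows from the length bound above. For $a=2$, $b$ is odd, the centres give $\|bt\|=\frac14$, and the $bt$-arc has half-length $b(1-2d_1)/4\ge d_2/4$; since $\frac14+\frac{d_2}4\ge d_2$ when $d_2\le\frac13$, the arc reaches the safe zone. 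With this repair your proof is complete. The paper avoids the case split altogether: the nearest $a$-safe and $b$-safe centres are at distance at most $1/(2ab)$, so the safe intervals overlap once $b(1-2d_1)+a(1-2d_2)\ge1$, a linear inequality checked at the four vertices of the polygon.
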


Thus, in dimension two, the mixed parameter space is a convex polygon, rather than just the classical point \((1/3,1/3)\) arising from \(\LRC(3)\).

The paper is organized as follows.
\begin{itemize}
\item In Section~\ref{sec:mixed-loneliness}, we define mixed lonely runner statements and the spaces \(\MLPS_k\), and we explain the view-obstruction interpretation.
\item In Section~\ref{sec:safe-functions}, we introduce the safe and unsafe indicator functions and prove a formula for their sum over an arithmetic progression of times.  We also give an example of how the summation formula can be used in case reductions for the classical Lonely Runner Conjecture.
\item In Section~\ref{sec:MLPS2}, we prove an exact two-function integral formula for unequal thresholds and we characterize \(\MLPS_2\).
\item In Section~\ref{sec:future-work}, we discuss some questions about higher-dimensional mixed parameter spaces and briefly describe related work in progress.
\end{itemize}

\section{Mixed loneliness}
\label{sec:mixed-loneliness}

\subsection{Definitions}

We now give the formal definitions of mixed loneliness and the associated parameter spaces that will be used throughout the paper.

\begin{definition}[Mixed Lonely Runner Conjecture] For
\[
    \mathbf d=(d_1,\ldots,d_k)\in(0,1/2]^k,
\]
we write \(\MLRC(\mathbf d)\) for the following statement:

\begin{quote}
For every tuple of distinct positive integer speeds \((s_1,\ldots,s_k)\), there exists a time \(t\in\RR\) such that
\[
    \|s_i t\|\geq d_i
    \qquad\text{for each }i=1,\ldots,k.
\]
\end{quote}
\end{definition}

Thus the classical Lonely Runner Conjecture $\LRC(k+1)$ is the assertion
\[
    \MLRC\underbrace{\left(\frac{1}{k+1},\ldots,\frac{1}{k+1}\right)}_{k\textrm{ times}}.
\]
Since the speeds may be relabeled, \(\MLRC(\mathbf d)\) is unchanged by permuting the coordinates of \(\mathbf d\).  We nevertheless keep the ordered-vector notation, since in applications a particular threshold is often attached to a speed with a particular arithmetic property.

\begin{definition}[Mixed Loneliness Parameter Space]
For each \(k\geq 1\), define
\[
    \MLPS_k
    :=
    \left\{
        \mathbf d\in(0,1/2]^k:
        \MLRC(\mathbf d)\text{ holds}
    \right\}.
\]
We call this the \(k\)-dimensional mixed loneliness parameter space.
\end{definition}

The reason for excluding $d_i=0$ from these definitions is that when a threshold is zero, the corresponding runner has no effect on the problem, and so that problem should be studied in a lower dimension instead.  It might seem to be a trivial point, but in fact we can show that when $\MLPS_k$ is allowed to include points on coordinate hyperplanes, it is non-convex.  When points on coordinate hyperplanes are forbidden, however, the question of convexity is still open.  We discuss this more in \ref{sec:future-work}.

\subsection{View obstruction interpretation}

Given a speed vector
\[
    \mathbf s=(s_1,\ldots,s_k)\in\ZZ^k,
\]
the relative positions of the runners are described by the line
\[
    t\mapsto t\mathbf s
\]
in the torus \(\RR^k/\ZZ^k\).  For
\[
    \mathbf d=(d_1,\ldots,d_k),
\]
let
\[
    B(\mathbf d)
    :=
    [d_1,1-d_1]\times\cdots\times[d_k,1-d_k].
\]
Then \(\MLRC(\mathbf d)\) holds exactly when, for every set of distinct positive integer speeds \(s_1,\ldots,s_k\), the line \(t\mapsto t\mathbf s\) intersects \(B(\mathbf d)\) in \(\RR^k/\ZZ^k\).  Equivalently, in \(\RR^k\), every such line intersects the periodic set
\[
    B(\mathbf d)+\ZZ^k.
\]

In the classical case
\[
    \mathbf d=
    \left(\frac{1}{k+1},\ldots,\frac{1}{k+1}\right),
\]
the set \(B(\mathbf d)\) is the cube
\[
    \left[\frac{1}{k+1},\frac{k}{k+1}\right]^k.
\]
For general \(\mathbf d\), the cube is replaced by an orthotope.  Thus mixed loneliness is the orthotope version of the usual view-obstruction problem.

\subsection{Basic observations about the mixed loneliness parameter space}

It is immediate that \(\MLPS_k\) is downward closed in each coordinate, because if
\[
    (d_1,\ldots,d_k)\in\MLPS_k
    \qquad\text{and}\qquad
    0< e_i\leq d_i
\]
for every \(i\), then
\[
    (e_1,\ldots,e_k)\in\MLPS_k.
\]
Thus, once a point belongs to \(\MLPS_k\), so does the box beneath it, not including points on coordinate planes.

Another initial observation is that the spaces \(\MLPS_k\) contain a nontrivial neighborhood of the origin.  Using the trivial union-sum bound, every line in \(\RR^k\) whose direction is not parallel to a coordinate hyperplane intersects one of the translated cubes
\[
    \left[\frac{1}{2k},1-\frac{1}{2k}\right]^k+\ZZ^k.
\]
In the language of mixed loneliness, this implies
\[
    \left(0,\frac{1}{2k}\right]^k\subseteq \MLPS_k.
\]
In particular, each \(\MLPS_k\) is \(k\)-dimensional.

The classical Lonely Runner Conjecture, where proven, gives a point on the boundary of the parameter space.  A proof of \(\LRC(k+1)\) is exactly a proof that
\[
    \left(\frac{1}{k+1},\ldots,\frac{1}{k+1}\right)\in \MLPS_k.
\]
If it is in the parameter space, then this point must be on the boundary because the speed set \(\{1,2,\ldots,k\}\), which is the standard tight example for the classical Lonely Runner Conjecture, has the property that for every \(t\in\RR\), at least one of
\[
    \|t\|,\|2t\|,\ldots,\|kt\|
\]
is at most \(1/(k+1)\).  Therefore, for any choice of nonnegative numbers
\[
    \epsilon_1,\ldots,\epsilon_k>0,
\]
where at least one $\epsilon_i\neq 0$, we have
\[
    \left(\frac{1}{k+1}+\epsilon_1,\ldots,
          \frac{1}{k+1}+\epsilon_k\right)
    \notin \MLPS_k.
\]

Note however that this does not mean that the classical point is necessarily mean that the classical point can't lie in the convex hull of other points.  It may still be possible to increase some coordinates while decreasing others and remain inside \(\MLPS_k\).

We will mostly study \(\MLPS_k\) through indicator functions and their Fourier series expansions.  The next section introduces these functions and proves an arithmetic progression summation formula that will be useful in later applications.

\section{Indicator functions and arithmetic time progressions}
\label{sec:safe-functions}

\subsection{Safe and unsafe functions}

We begin by introducing midpoint-valued indicator functions for the times at which a runner lies outside a prescribed neighborhood of the origin.  The geometric origin of these distance-threshold sets goes back to the view-obstruction formulation, including the work of Cusick \cite{Cusick1974ViewObstructionGeometry} and later Chen and Cusick \cite{ChenCusickCubes}.  Perarnau and Serra also used indicator random variables
for the events
\[
    A_i=\{t\in[0,1):\|v_i t\|<\delta\}
\]
to study pairwise correlations among runners
\cite{PerarnauSerraCorrelation}.

Our approach is to analyze these distance-threshold functions using Fourier series.  Fourier methods have appeared in work on the Lonely Runner
Conjecture at least as early as Cusick and Pomerance's work on the view-obstruction problem \cite{CusickPomeranceViewObstructionIII}.  They also
play an important role in Tao's finite-checking reduction, where Fourier-analytic approximations to distance-threshold indicators are used \cite{TaoRemarks}.

For \(\delta\in(0,1/2]\), we say that a runner with speed \(s\) is \(\delta\)-distant from the origin at time \(t\) if
\[
    \|st\|\geq \delta.
\]
We also say that the runner is \(\delta\)-safe, or is in the \(\delta\)-safe zone.  When \(\delta\) is clear from context, we may simply say that the runner is safe.

The safe function is a midpoint-valued version of the indicator of the event \(\|st\|\geq\delta\).  Let \(s\in\ZZp\) and \(t\in\RR\).  For \(0<\delta<1/2\), define
\[
\safe(s,\delta,t)=
    \begin{cases}
      1, & \text{if }\|st\|>\delta,\\
      \frac12, & \text{if }\|st\|=\delta,\\
      0, & \text{if }\|st\|<\delta.
   \end{cases}
\]
Thus \(\safe(s,\delta,t)\) differs from the usual indicator function only at the boundary points \(\|st\|=\delta\).  The midpoint value is chosen so that the Fourier series below converges pointwise to the \(\safe\) function at the jump discontinuities.

For \(0<\delta<1/2\), we have
\[
    \safe(s,\delta,t)>0
    \quad\Longleftrightarrow\quad
    \|st\|\geq \delta.
\]
Consequently, positivity of a product of safe functions still detects the existence of a time at which all the corresponding distance requirements are met.

We define the endpoint cases where $\delta=0$ and $\delta=1/2$ by the limiting Fourier-series convention:
\[
    \safe(s,0,t)=1,
    \qquad
    \safe(s,1/2,t)=0.
\]
The second convention means that \(\safe(s,1/2,t)\) should be understood as the limiting midpoint-valued function rather than as the literal indicator of the event \(\|st\|\geq 1/2\).  We emphasize that this endpoint convention is a Fourier-series convention only.  When discussing \(\MLRC(\mathbf d)\) at boundary points of \((0,1/2]^k\), we always mean the original closed distance condition \(\|s_i t\|\geq d_i\).

We define the unsafe function by
\[
    \unsafe(s,\delta,t)=1-\safe(s,\delta,t).
\]
For \(0<\delta<1/2\), the function \(\unsafe(s,\delta,t)\) is the
midpoint-valued indicator of the event \(\|st\|\leq\delta\).  It is a rectangular pulse wave of period \(1/s\), mean value \(2\delta\), and
duty cycle \(2\delta\), and its Fourier series is
\[
\unsafe(s,\delta,t)
=
2\delta+
\frac{2}{\pi}
\sum_{j=1}^{\infty}
\frac{\sin(2\pi j\delta)}{j}\cos(2\pi s j t).
\]
Equivalently,
\[
\safe(s,\delta,t)
=
1-2\delta-
\frac{2}{\pi}
\sum_{j=1}^{\infty}
\frac{\sin(2\pi j\delta)}{j}\cos(2\pi s j t).
\]
These identities hold pointwise for \(0<\delta<1/2\), with the midpoint values at discontinuities.  The endpoint cases \(\delta=0\) and \(\delta=1/2\) will only be used with this limiting Fourier-series convention.

\subsection{Origin of mixed loneliness and a summation formula over arithmetic progressions of times}

We now explain how the concept of mixed loneliness arises naturally from the study of the classical uniform loneliness measure.  The starting point is the following simple observation.  If $m$ divides $s$, then
\[
    \|s(t+i/m)\|=\|st\|
    \qquad\text{for every }i\in\ZZ.
\]
Thus, if a runner whose speed is divisible by \(m\) is \(\delta\)-distant from the origin at time \(t\), then the runner is also \(\delta\)-distant from the origin at each of the times
\[
    t,\quad t+\frac1m,\quad t+\frac2m,\quad \ldots,\quad t+\frac{m-1}{m}.
\]
When \(m\nmid s\), this invariance no longer holds.  Nevertheless, the Fourier expansion of the safe function gives an exact formula for the sum of the safe values over this arithmetic progression of times.  The resulting formula expresses the sum of safe values with one distance threshold in terms of a single safe function with a different threshold.

We need two pieces of notation before proceeding.  First, for \(x\in\RR\), let $\langle x\rangle$ denote the centered fractional part of $x$, that is, the unique representative of \(x\) modulo \(1\) lying in    $\left(-\frac12,\frac12\right]$. Thus $|\langle x\rangle|=\|x\|$.

Second, we will use the sign function with the convention that \(\sgn(0)=0\).

\begin{lemma}
\label{lem:progression-safe-sum}
Let \(s,m\in\ZZp\), and let \(\delta\in(0,1/2]\).  Set
\[
    d=\gcd(s,m),
    \qquad
    \hat m=\frac{m}{d}.
\]
Then, for any \(t\in\RR\),
\[
\begin{aligned}
    \sum_{i=0}^{m-1}
    \safe&\left(s,\delta,t+\frac{i}{m}\right)\\
    \qquad &=
    m(1-2\delta)
    +
    d\,\sgn(\langle \hat m\delta\rangle)
    \Big(
        \safe\big(\lcm(s,m),\|\hat m\delta\|,t\big)
        -1+2\|\hat m\delta\|
    \Big).
\end{aligned}
\]
\end{lemma}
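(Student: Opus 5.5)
We substitute the Fourier series for \(\safe\) from the previous subsection and sum term by term over the progression. Writing \(t_i=t+i/m\), we get
\[
\sum_{i=0}^{m-1}\safe(s,\delta,t_i)
= m(1-2\delta)-\frac{2}{\pi}\sum_{j=1}^{\infty}\frac{\sin(2\pi j\delta)}{j}\sum_{i=0}^{m-1}\cos\!\Big(2\pi sjt+\frac{2\pi sji}{m}\Big).
\]
The sum is finite in \(i\), so the interchange is harmless, since the series converges pointwise. The inner sum is the real part of a geometric sum of \(m\)-th roots of unity. It equals \(m\cos(2\pi sjt)\) when \(m\mid sj\) and vanishes otherwise. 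Since \(m\mid sj\) iff \(\hat m\mid j\), only the terms \(j=\hat m\ell\) with \(\ell\ge1\) survive. Using \(s\hat m=\lcm(s,m)=:L\) and \(m/\hat m=d\), the sum becomes
\[
m(1-2\delta)-d\cdot\frac{2}{\pi}\sum_{\ell=1}^{\infty}\frac{\sin(2\pi \ell\hat m\delta)}{\ell}\cos(2\pi L\ell t).
\]

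Next we recognise the remaining series as a (possibly sign-flipped) unsafe series with threshold \(\delta'=\|\hat m\delta\|\). By periodicity, \(\sin(2\pi\ell\hat m\delta)=\sin(2\pi\ell\langle\hat m\delta\rangle)\). Since sine is odd, this equals \(\sgn(\langle\hat m\delta\rangle)\sin(2\pi\ell\delta')\), and it is \(0\) when \(\langle\hat m\delta\rangle=0\), consistent with \(\sgn(0)=0\). If \(\delta'\in(0,1/2)\), the unsafe Fourier identity with speed \(L\) gives
\[
\frac{2}{\pi}\sum_{\ell\ge1}\frac{\sin(2\pi\ell\delta')}{\ell}\cos(2\pi L\ell t)=\unsafe(L,\delta',t)-2\delta'=1-2\delta'-\safe(L,\delta',t).
\]
Multiplying by \(-d\,\sgn(\langle\hat m\delta\rangle)\) yields exactly the stated correction term \(d\,\sgn(\langle\hat m\delta\rangle)\big(\safe(L,\delta',t)-1+2\delta'\big)\).

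It remains to check the endpoint cases. If \(\delta'=0\), the sign factor kills the correction, and the formula reduces to \(m(1-2\delta)\), which matches the vanishing series. If \(\delta'=1/2\), then \(\langle\hat m\delta\rangle=1/2\), so the sign is \(+1\). The series vanishes, and with the convention \(\safe(L,1/2,t)=0\) the bracket is \(0-1+1=0\), so the formula is consistent. The case \(\delta=1/2\) in the hypothesis is covered the same way: the left side is \(0\) under the convention, the series vanishes, and \(\hat m\delta=\hat m/2\). This makes \(\delta'\in\{0,1/2\}\), and the right side is \(m\cdot0+0=0\).

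I expect the main obstacle to be these boundary and convention issues rather than the computation. One must confirm that pointwise convergence with midpoint values justifies the termwise summation at every \(t\), including discontinuities. It does, because we sum finitely many pointwise-convergent series, and the subseries over \(j=\hat m\ell\) is itself the pointwise-convergent Fourier series of the pulse wave of speed \(L\). One must also confirm that the endpoint conventions for \(\delta,\delta'\in\{0,1/2\}\) match on both sides.
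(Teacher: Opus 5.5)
Your proposal is correct and follows the paper's proof essentially step for step. You Fourier-expand, discard the terms with $m\nmid sj$ via the roots-of-unity sum, reindex $j=\ell\hat m$, rewrite $\sin(2\pi\ell\hat m\delta)$ as $\sgn(\langle\hat m\delta\rangle)\sin(2\pi\ell\|\hat m\delta\|)$, and recognize the safe series at speed $\lcm(s,m)$. The only real difference is how the interchange of summations is justified: the paper inserts Abel factors $r^j$, while you argue directly that a finite sum of pointwise-convergent series may be summed termwise and that the surviving subseries is the Fourier series of the pulse wave at speed $\lcm(s,m)$. Both justifications are valid, and your explicit check of the endpoint cases ($\delta=1/2$ and $\|\hat m\delta\|\in\{0,1/2\}$) is somewhat more careful than the paper's.
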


\begin{proof}
Write
\[
    d=\gcd(s,m),\qquad s=d\hat s,\qquad m=d\hat m,
\]
so that \(\gcd(\hat s,\hat m)=1\).  We first prove the identity for \(0<\delta<1/2\).  To justify the use of Fourier series, we can insert an
Abel factor \(r^j\), \(0<r<1\), in the Fourier expansion below, perform the finite summation, and then let \(r\to1^{-}\).  This avoids any issue with
interchanging the finite sum and the non-absolutely convergent Fourier series.

Using the Fourier expansion of the \(\safe\) function, we have
\begin{align*}
\sum_{i=0}^{m-1}
\safe&\left(s,\delta,t+\frac{i}{m}\right)\\
&=
\sum_{i=0}^{m-1}
\left(
1-2\delta
-\frac{2}{\pi}
\sum_{j=1}^{\infty}
\frac{\sin(2\pi j\delta)}{j}
\cos\left(2\pi sj\left(t+\frac{i}{m}\right)\right)
\right)\\
&=
m(1-2\delta)
-\frac{2}{\pi}
\sum_{j=1}^{\infty}
\frac{\sin(2\pi j\delta)}{j}
\underbrace{\sum_{i=0}^{m-1}
\cos\left(2\pi sj\left(t+\frac{i}{m}\right)\right)}_{*}.
\end{align*}

Now consider the inner sum marked with * above.  This can be simplified by converting to complex polar form and using the geometric series formula.  But there is also a more elegant geometric interpretation.

Consider the arguments to cosine that are being summed over.  Since $2\pi sj$ is a multiple of $2\pi$, and $\left\{t+\frac{i}{m}\right\}_{i=0}^{m-1}$ is an arithmetic progression evenly distributed over the interval $[0,1)$ modulo 1, we can conclude that the angles being summed over, modulo $2\pi$, will be evenly distributed about the unit circle in such a way that the corresponding points on the unit circle will have their centroid at the origin, except in the case that all of the angles are equal modulo $2\pi$, which happens exactly when $sj$ is a multiple of $m$.

In the cases where the centroid is at the origin (when $m\nmid sj$), the * expression is 0.  Hence, for the outer $\sum_{j=1}^\infty$ sum, we need only include values of \(j\) for which \(m\mid sj\).  Since
\[
    m\mid sj
    \quad\Longleftrightarrow\quad
    \hat m\mid \hat s j
    \quad\Longleftrightarrow\quad
    \hat m\mid j,
\]
only the terms \(j=\ell\hat m\) survive.  Therefore
\begin{align*}
\sum_{i=0}^{m-1}
\safe&\left(s,\delta,t+\frac{i}{m}\right)\\
&=
m(1-2\delta)
-\frac{2}{\pi}
\sum_{\ell=1}^{\infty}
\frac{m}{\ell\hat m}
\sin\left(2\pi\ell\hat m\delta\right)
\cos(2\pi s\ell\hat m t)\\
&=
m(1-2\delta)
-\frac{2d}{\pi}
\sum_{\ell=1}^{\infty}
\frac{1}{\ell}
\sin\left(2\pi\ell\hat m\delta\right)
\cos(2\pi s\ell\hat m t).
\end{align*}

Note that the final version of the summation above looks very similar to the Fourier expansion of a safe function.  To express it as such, we must ensure that what is being multiplied by $2\pi\ell$ inside the sine function is a constant in the interval $[0,1/2]$.

Recall that $\langle \hat m\delta\rangle\in(-1/2,1/2]$ is the centered fractional part of $\hat{m}\delta$, so that \(\hat m\delta\equiv \langle \hat m\delta\rangle\pmod 1\). Thus
\begin{align*}
    \sin\left(2\pi\ell\hat m\delta\right)
    &=
    \sin\left(2\pi\ell\langle \hat m\delta\rangle\right)\\
    &=
    \sgn(\langle \hat m\delta\rangle)\,
    \sin\left(2\pi\ell |\langle \hat m\delta\rangle|\right)\\
    &=
    \sgn(\langle \hat m\delta\rangle)\,
    \sin\left(2\pi\ell \|\hat m\delta\|\right).
\end{align*}

Thus
\begin{align*}
\sum_{i=0}^{m-1}
\safe&\left(s,\delta,t+\frac{i}{m}\right)\\
&=
m(1-2\delta)
+
d\,\sgn(\langle \hat m\delta\rangle)
\left(
-\frac{2}{\pi}
\sum_{\ell=1}^{\infty}
\frac{1}{\ell}
\sin\left(2\pi\ell \|\hat m\delta\|\right)
\cos(2\pi s \hat m \ell t)
\right).
\end{align*}
The expression in parentheses looks like the non-constant part of the Fourier expansion of
\[
    \safe\left(s\hat m,\|\hat m\delta\|,t\right).
\]
For \(0<\|\hat m\delta\|<1/2\), this is exactly the non-constant part of the Fourier expansion.  If \(\|\hat m\delta\|\in\{0,1/2\}\), the same expression is interpreted using the endpoint convention for \(\safe\), which agrees with the Fourier expansion.

Since \(s\hat m=\lcm(s,m)\), this gives
\begin{align*}
\sum_{i=0}^{m-1}
\safe&\left(s,\delta,t+\frac{i}{m}\right)\\
&=
m(1-2\delta)
+
d\,\sgn(\langle \hat m\delta\rangle)
\left(
    \safe\left(\lcm(s,m),\|\hat m\delta\|,t\right)
    -1+2\|\hat m\delta\|
\right),
\end{align*}
as was to be proven.

The endpoint cases \(\|\hat m\delta\|=0\) and \(\|\hat m\delta\|=1/2\) follow from the same Fourier-series convention used in the definition of \(\safe\), and can be verified directly from the formula above.
\end{proof}

Lemma~\ref{lem:progression-safe-sum} shows that summing a fixed distance threshold over an arithmetic progression can naturally produce a new
distance threshold.  This is one of the motivations for studying mixed loneliness.

In the next subsection we will explain how this summation can be used for case reductions of the classical Lonely Runner Conjecture, in a way reminiscent of the Prime Filtering Lemma of Barajas and Serra \cite{BarajasSerraSeven}.

To enable that application, let us look at a corollary that simplifies the Lemma~\ref{lem:progression-safe-sum} formula considerably when the threshold and the time progression are related by $\delta=1/n$ where $m\mid n$.

\begin{corollary}
\label{cor:safe-sum-classical-threshold}
Let \(s,n,m\in\ZZp\), with \(n>2\), \(m>1\), and \(m\mid n\).  Set
\[
    d=\gcd(s,m),
    \qquad
    c=\frac{n}{m}.
\]
Then, for every \(t\in\RR\),
\[
\sum_{i=0}^{m-1}
\safe\left(s,\frac1n,t+\frac{i}{m}\right)
=
\begin{cases}
m-\dfrac{2}{c},
& \text{if } cd\in\{1,2\},\\[8pt]
m-d+d\,
\safe\left(\lcm(s,m),\dfrac{1}{cd},t\right),
& \text{if } cd>2.
\end{cases}
\]
\end{corollary}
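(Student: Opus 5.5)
This follows by direct specialization of Lemma~\ref{lem:progression-safe-sum} with \(\delta=1/n\). Here \(\hat m=m/d\), so
\[
    \hat m\delta=\frac{m}{dn}=\frac{1}{cd}.
\]
Everything then reduces to evaluating \(\langle 1/(cd)\rangle\), its sign, and \(\|1/(cd)\|\). Note first that \(n>2\) gives \(\delta=1/n\in(0,1/2)\), so the lemma applies with the genuine midpoint-valued safe function on the left-hand side.

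We split on \(cd\), a positive integer.

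\textbf{Case \(cd=1\).} Then \(\hat m\delta=1\), so \(\langle\hat m\delta\rangle=0\) and \(\sgn=0\). The formula collapses to \(m(1-2/n)=m-2m/n=m-2/c\).

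\textbf{Case \(cd=2\).} Then \(\hat m\delta=1/2\), so \(\langle\hat m\delta\rangle=1/2\), the sign is \(1\), and \(\|\hat m\delta\|=1/2\). By the endpoint convention \(\safe(\cdot,1/2,\cdot)=0\), so the correction term is \(d(0-1+1)=0\). Again the formula gives \(m-2/c\).

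\textbf{Case \(cd>2\).} Here \(0<1/(cd)<1/2\), so \(\langle\hat m\delta\rangle=\|\hat m\delta\|=1/(cd)\) with sign \(1\). The lemma gives
\[
    m\Bigl(1-\frac{2}{n}\Bigr)+d\Bigl(\safe\bigl(\lcm(s,m),\tfrac{1}{cd},t\bigr)-1+\frac{2}{cd}\Bigr).
\]
Using \(m\cdot 2/n=2/c\) and \(d\cdot 2/(cd)=2/c\), the two \(2/c\) terms cancel, leaving
\[
    m-d+d\,\safe\bigl(\lcm(s,m),\tfrac{1}{cd},t\bigr).
\]

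The only delicate point is the \(cd=2\) case. It relies on the endpoint convention \(\safe(\cdot,1/2,\cdot)=0\), which the lemma explicitly permits, so nothing beyond arithmetic is needed. The hypothesis \(m>1\) plays no role in the computation.
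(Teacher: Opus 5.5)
Your proposal is correct and follows the paper's proof: specialize Lemma~\ref{lem:progression-safe-sum} at \(\delta=1/n\), note \(\hat m\delta=1/(cd)\), and split on the value of \(cd\), using the endpoint convention \(\safe(\cdot,1/2,\cdot)=0\) when \(cd=2\). The only cosmetic difference is in the case \(cd=2\): you observe directly that the correction term \(d(0-1+1)\) vanishes, leaving \(m-2/c\), whereas the paper first reduces to \(m-d\) and then uses \(d=2/c\).
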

\begin{proof}
We apply Lemma~\ref{lem:progression-safe-sum} with $\delta=\frac1n$. Then, since $n=mc$,
\[
    \hat m\delta=\frac m d\cdot\frac 1 n=\frac m{dmc}=\frac 1 {cd}.
\]

If $cd=1$, then $\langle \hat m \delta \rangle=\left\langle 1\right\rangle= 0$.  So the formula from Lemma~\ref{lem:progression-safe-sum} collapses to $m(1-2\delta)=m-\frac{2m}{n}=m-\frac2 c$.

Otherwise, assuming $cd\geq2$, we have
\[\langle \hat m \delta \rangle=\left\langle \frac{1}{cd}\right\rangle= \frac{1}{cd}.\]

Thus

\[
\begin{aligned}
    \sum_{i=0}^{m-1}
    \safe&\left(s,\delta,t+\frac{i}{m}\right)\\
    \qquad &=
    m(1-2\delta)
    +
    d\,\sgn(\langle \hat m\delta\rangle)
    \Big(
        \safe\left(\lcm(s,m),\|\hat m\delta\|,t\right)
        -1+2\|\hat m\delta\|
    \Big)\\
    &=
    m\left(1-\frac2n\right)
    +
    d\Big(
        \safe\left(\lcm(s,m),\frac{1}{cd},t\right)
        -1+\frac2{cd}
    \Big)\\
    &=m-\frac{2m}n
    +
    d\,\safe\left(\lcm(s,m),\frac{1}{cd},t\right)
        -d+\frac2{c}\\
    &=m-d
    +
    d\,\safe\left(\lcm(s,m),\frac{1}{cd},t\right)
\end{aligned}
\]

In the case that $cd=2$, the threshold $1/cd$ is $1/2$, which by our endpoint convention has a safe value of 0 for any speed or time.  Hence the expression above becomes simply $m-d$ and since $cd=2$, $d=2/c$.
\end{proof}

Now we will present a concrete example of how the corollary can be applied to reduce cases in proofs of the classical LRC.

\subsection{Application to classical case reductions}
\label{subsec:case-reductions}

We now give an example of how Corollary~\ref{cor:safe-sum-classical-threshold} can be applied to reductions for the classical Lonely Runner Conjecture in any dimension.

Suppose that we are trying to prove \(\LRC(k+1)\), so that the desired distance threshold is \(1/(k+1)\).  Let
\[
    S=\{s_1,\ldots,s_k\}
\]
be a set of \(k\) distinct positive integer speeds.  Assume that \(M\subseteq S\) is the set of speeds divisible by \(k+1\), and write
\[
    R=S\setminus M.
\]
If \(M\) is nonempty, then after dividing the speeds in \(M\) by \(k+1\), any known lonely runner result for \(|M|+1\) total runners gives a time \(t\) such
that
\[
    \|st\|\geq \frac{1}{k+1}
    \qquad\text{for every }s\in M.
\]

Now consider the arithmetic progression of \(k+1\) times
\[
    t,\quad t+\frac{1}{k+1},\quad t+\frac{2}{k+1},
    \quad \ldots,\quad t+\frac{k}{k+1}.
\]
Every speed \(s\in M\) remains fixed modulo \(1\) along this progression, since
\(k+1\mid s\).  Therefore every speed in \(M\) is still \(1/(k+1)\)-safe at
each of these \(k+1\) times.

It remains to control the speeds in \(R\).  Let \(s\in R\), and set
\[
    d_s=\gcd(s,k+1).
\]
Applying Corollary~\ref{cor:safe-sum-classical-threshold} with
\(n=m=k+1\), hence \(c=1\), gives
\begin{align*}
\sum_{i=0}^{k}
\safe&\left(s,\frac1{k+1},t+\frac{i}{k+1}\right)\\
&=
\begin{cases}
k-1,
& \text{if } d_s\in\{1,2\},\\[8pt]
k+1-d_s+d_s\,
\safe\left(\lcm(s,k+1),\dfrac{1}{d_s},t\right),
& \text{if } d_s>2.
\end{cases}
\end{align*}
Equivalently,
\begin{align*}
\sum_{i=0}^{k}
\unsafe&\left(s,\frac1{k+1},t+\frac{i}{k+1}\right)\\
&=
\begin{cases}
2,
& \text{if } d_s\in\{1,2\},\\[8pt]
d_s\,
\unsafe\left(\lcm(s,k+1),\dfrac{1}{d_s},t\right),
& \text{if } d_s>2.
\end{cases}
\end{align*}
In particular,
\[
\sum_{i=0}^{k}
\unsafe\left(s,\frac1{k+1},t+\frac{i}{k+1}\right)
\leq
\max(2,d_s).
\]

Since \(\unsafe(s,1/(k+1),t)\) is greater than or equal to the indicator of the event
\[
    \left\{t:\|st\|<\frac1{k+1}\right\},
\]
the number of times in the above time progression at which \(s\) is not at least $1/(k+1)$-distant from the origin is at most \(\max(2,d_s)\).  Hence, by the union bound, the total number of times in the progression at which at least one speed in \(R\)
is not \(1/(k+1)\)-safe is at most
\[
    \sum_{s\in R}\max(2,\gcd(s,k+1)).
\]
Therefore, if
\[
    \sum_{s\in R}\max(2,\gcd(s,k+1)) < k+1,
\]
then at least one time in the progression avoids all of these bad events. At that time every speed in \(R\) is \(1/(k+1)\)-safe, while every speed in \(M\) is safe by construction.  Thus \(S\) satisfies the lonely runner bound
\(1/(k+1)\).

This gives a typical use of the progression formula.  Speeds divisible by some \(m\) are held fixed along a full residue progression, while the remaining speeds are controlled by their greatest common divisors with \(m\).  Thus Corollary~\ref{cor:safe-sum-classical-threshold} can be used for divisibility-based reductions similar to reductions such as the Prime Filtering Lemma of Barajas and Serra \cite{BarajasSerraSeven}.

\section{The two-dimensional mixed loneliness parameter space}
\label{sec:MLPS2}

\subsection{An exact two-function integral formula for unequal thresholds}

Before proving the characterization of $\MLPS_2$, we first provide an exact formula for the measure of time in $[0,1)$ at which a speed $s_1$ is $\delta_1$-safe and a speed $s_2$ is $\delta_2$-safe.  Equivalently, this is a formula for

\[\int_0^1\safe(s_1,\delta_1,t)\safe(s_2,\delta_2,t)dt.\]

In this lemma, we use $\{x\}$ to stand for the fractional part of $x$ in $[0,1)$, that is, $x-\lfloor x\rfloor$.

\begin{lemma}
\label{lem:two-safe-integral}
Let \(s_1,s_2\) be distinct positive integers, and let
\(\delta_1,\delta_2\in(0,1/2]\).  Set $\hat{s}_1=s_1/\gcd(s_1,s_2)$ and $\hat{s}_2=s_2/\gcd(s_1,s_2)$.
Let $B_2$ denote the second Bernoulli polynomial, $B_2(x)=x^2-x+1/6$. Then
\begin{align*}
\int_0^1
\safe&(s_1,\delta_1,t)\safe(s_2,\delta_2,t)\,dt\\
&=
(1-2\delta_1)(1-2\delta_2)
+
\frac{
B_2(\{\hat{s}_2\delta_1-\hat{s}_1\delta_2\})
-
B_2(\{\hat{s}_2\delta_1+\hat{s}_1\delta_2\})
}{
\hat{s}_1\hat{s}_2
}.
\end{align*}
\end{lemma}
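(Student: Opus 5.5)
The plan is to compute the integral through the Fourier expansions of the two $\safe$ functions and Parseval's identity. Both functions are bounded, hence in $L^2[0,1)$. Their midpoint values sit on a finite set of measure zero, so they do not affect the integral. Parseval lets us compute $\int_0^1 fg$ as the sum over frequencies of the products of Fourier coefficients, with no convergence subtleties, since the coefficients are square-summable. For $\delta_i<1/2$ we use
\[
\safe(s_i,\delta_i,t)=1-2\delta_i-\frac{2}{\pi}\sum_{j\ge1}\frac{\sin(2\pi j\delta_i)}{j}\cos(2\pi s_i j t).
\]
When $\delta_i=1/2$, the literal indicator $\|s_it\|\ge 1/2$ holds only on a finite set, so its integral agrees with the convention $\safe=0$. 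The expansion is also consistent there, because $1-2\delta_i=0$ and $\sin(\pi j)=0$. So the endpoint case needs no separate treatment.

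First, multiply the expansions and integrate term by term. The constant terms give $(1-2\delta_1)(1-2\delta_2)$. The cross terms between a constant and a cosine integrate to $0$. For $j_1,j_2\ge1$, the product $\cos(2\pi s_1j_1t)\cos(2\pi s_2j_2t)$ has integral $1/2$ if $s_1j_1=s_2j_2$ and $0$ otherwise; the "sum frequency" term never survives since $s_1j_1+s_2j_2>0$. Writing $g=\gcd(s_1,s_2)$, the condition $s_1j_1=s_2j_2$ becomes $\hat s_1j_1=\hat s_2j_2$. Since $\gcd(\hat s_1,\hat s_2)=1$, the surviving pairs are exactly $j_1=\ell\hat s_2$, $j_2=\ell\hat s_1$ with $\ell\ge1$. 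The remaining contribution is therefore
\[
\frac{4}{\pi^2}\cdot\frac12\sum_{\ell\ge1}\frac{\sin(2\pi\ell\hat s_2\delta_1)\sin(2\pi\ell\hat s_1\delta_2)}{\ell^2\hat s_1\hat s_2}.
\]

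Next, apply the product-to-sum identity $\sin a\sin b=\tfrac12(\cos(a-b)-\cos(a+b))$. This rewrites the sum as
\[
\frac{1}{\pi^2\hat s_1\hat s_2}\sum_{\ell\ge1}\frac{\cos(2\pi\ell x_-)-\cos(2\pi\ell x_+)}{\ell^2},
\qquad x_\pm=\hat s_2\delta_1\pm\hat s_1\delta_2 .
\]
Finally, invoke the classical Fourier series of the periodic Bernoulli polynomial,
\[
\sum_{\ell\ge1}\frac{\cos(2\pi\ell x)}{\ell^2}=\pi^2B_2(\{x\}),
\]
valid for all real $x$; the series converges absolutely and the left side is continuous and $1$-periodic. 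Substituting this gives exactly
\[
\frac{B_2(\{x_-\})-B_2(\{x_+\})}{\hat s_1\hat s_2},
\]
which is the claimed formula.

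The only step needing care is justifying the term-by-term integration. I would do this by citing Parseval, or equivalently the $L^2$ convergence of the partial sums, rather than manipulating the conditionally convergent pointwise series. The Bernoulli identity is standard, and it can be checked by computing the Fourier coefficients of $B_2(\{x\})$ directly, by integrating by parts twice. The expected main obstacle is bookkeeping: making sure the factor $\tfrac12$ from the cosine product and the $\tfrac12$ from product-to-sum combine with $4/\pi^2$ and $\pi^2$ to give exactly $1/(\hat s_1\hat s_2)$. Once $j_1=\ell\hat s_2$, $j_2=\ell\hat s_1$ is substituted, the $1/(j_1j_2)$ factor becomes $1/(\ell^2\hat s_1\hat s_2)$, which is where the denominator comes from.
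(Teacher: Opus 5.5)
Your proposal is correct and follows essentially the same route as the paper: Fourier expansions of both $\safe$ functions, orthogonality selecting the pairs $j_1=\ell\hat s_2$, $j_2=\ell\hat s_1$, the product-to-sum identity, and the Fourier series of $B_2(\{x\})$. The differences are only in the justifications, and yours are valid: you use Parseval/$L^2$ convergence instead of the paper's Abel factors $r^j$. You also handle $\delta_i=1/2$ directly, noting that the coefficients vanish, which is consistent with $\safe\equiv 0$ almost everywhere, whereas the paper passes to a limit.
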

\begin{proof}
First we will assume \(0<\delta_1,\delta_2<1/2\), and handle the endpoints later.  We use the Fourier expansion
\[
\safe(s,\delta,t)
=
1-2\delta
-
\frac{2}{\pi}
\sum_{j=1}^{\infty}
\frac{\sin(2\pi j\delta)}{j}
\cos(2\pi s j t).
\]
As before, the calculation below may be justified by inserting Abel factors \(r^j\), performing the computation for \(0<r<1\), and then letting \(r\to1^{-}\), so as to avoid issues with non-absolute convergence.

Multiplying the Fourier expansions for \(\safe(s_1,\delta_1,t)\) and \(\safe(s_2,\delta_2,t)\), then integrating over \([0,1]\), the constant terms give
\[
    (1-2\delta_1)(1-2\delta_2).
\]
All terms involving only one cosine integrate to zero.  Therefore
\[
\begin{aligned}
\int_0^1
\safe&(s_1,\delta_1,t)\safe(s_2,\delta_2,t)\,dt\\
&=
(1-2\delta_1)(1-2\delta_2)\\
&\qquad+
\frac{4}{\pi^2}
\sum_{j,\ell\in\ZZp}
\frac{\sin(2\pi j\delta_1)}{j}
\frac{\sin(2\pi \ell\delta_2)}{\ell}
\int_0^1
\cos(2\pi s_1jt)\cos(2\pi s_2\ell t)\,dt.
\end{aligned}
\]
By orthogonality,
\[
\int_0^1
\cos(2\pi s_1jt)\cos(2\pi s_2\ell t)\,dt
=
\begin{cases}
\frac12, & s_1j=s_2\ell,\\
0, & s_1j\neq s_2\ell.
\end{cases}
\]
Since $s_1=\gcd(s_1,s_2)\hat{s}_1$, $s_2=\gcd(s_1,s_2)\hat{s}_2$, and    $\gcd(\hat{s}_1,\hat{s}_2)=1$, we have
\[
    s_1j=s_2\ell\qquad\iff\qquad\hat{s}_1j=\hat{s}_2\ell.
\]
Thus the surviving terms are exactly those with
\[
    j=\hat{s}_2 q,
    \qquad
    \ell=\hat{s}_1 q,
    \qquad
    q\geq1.
\]
Hence
\[
\begin{aligned}
\int_0^1
\safe(s_1,\delta_1,t)\safe(s_2,\delta_2,t)\,dt
&=
(1-2\delta_1)(1-2\delta_2)\\
&\quad+
\frac{2}{\pi^2\hat{s}_1\hat{s}_2}
\sum_{q=1}^{\infty}
\frac{
\sin(2\pi \hat{s}_2 q\delta_1)
\sin(2\pi \hat{s}_1 q\delta_2)
}{q^2}.
\end{aligned}
\]
Using the product-to-sum identity for sines, that is,
\[
    \sin X\sin Y
    =
    \frac12\big(\cos(X-Y)-\cos(X+Y)\big),
\]
we obtain
\[
\begin{aligned}
\int_0^1
\safe&(s_1,\delta_1,t)\safe(s_2,\delta_2,t)\,dt\\
&=
(1-2\delta_1)(1-2\delta_2)\\
&\qquad+
\frac{1}{\pi^2\hat{s}_1\hat{s}_2}
\sum_{q=1}^{\infty}
\frac{
\cos\big(2\pi q(\hat{s}_2\delta_1-\hat{s}_1\delta_2)\big)
-
\cos\big(2\pi q(\hat{s}_2\delta_1+\hat{s}_1\delta_2)\big)
}{q^2}.
\end{aligned}
\]
Finally, the standard identity
\[
    \sum_{q=1}^{\infty}
    \frac{\cos(2\pi qx)}{q^2}
    =
    \pi^2 B_2(\{x\}),
    \qquad
    B_2(x)=x^2-x+\frac16,
\]
gives the formula in the lemma.

The endpoint cases where \(\delta_i=1/2\) follow by taking limits, using the Fourier-series endpoint convention in the definition of the \(\safe\) function.
\end{proof}

The formula from the lemma can also directly be translated into a formula for 

\[\int_0^1\unsafe(s_1,\delta_1,t)\unsafe(s_2,\delta_2,t) dt\]
by the fact that
\begin{align*}
\int_0^1&\unsafe(s_1,\delta_1,t)\unsafe(s_2,\delta_2,t) dt\\
&=\int_0^1\Big(1-\safe(s_1,\delta_1,t)-\safe(s_2,\delta_2,t)+\safe(s_1,\delta_1,t)\safe(s_2,\delta_2,t)\Big) dt\\
&=2\delta_1+2\delta_2-1+\int_0^1\safe(s_1,\delta_1,t)\safe(s_2,\delta_2,t)dt\\
&=4\delta_1\delta_2
+
\frac{
B_2(\{\hat{s}_2\delta_1-\hat{s}_1\delta_2\})
-
B_2(\{\hat{s}_2\delta_1+\hat{s}_1\delta_2\})
}{
\hat{s}_1\hat{s}_2
}.
\end{align*}

As such, our formula generalizes the formula of Perarnau and Serra's Proposition~8 in \cite{PerarnauSerraCorrelation}, which is for the equal threshold case.  Their formula is obtained geometrically and is written using maxes and mins of interval lengths, so the connection with our formula is not completely transparent.  In the equal-threshold case \(\delta_1=\delta_2=\delta\), however, our formula agrees with theirs.

\subsection{Proof of Theorem A}
As stated in the introduction, our main theorem is the following.
\begin{thmA}
The mixed loneliness parameter space \(\MLPS_2\) is exactly
\[
    \MLPS_2
    =
    \left\{
        (d_1,d_2)\in(0,1/2]^2:
        2d_1+d_2\leq 1
        \text{ and }
        d_1+2d_2\leq 1
    \right\}.
\]
\end{thmA}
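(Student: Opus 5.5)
The plan is to prove the two inclusions separately. Necessity uses the speed pairs \((1,2)\) and \((2,1)\). Sufficiency uses downward closedness together with a covering argument on the circle.

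\emph{Necessity.} Take speeds \((s_1,s_2)=(2,1)\), so runner \(1\) has speed \(2\) and threshold \(d_1\), and runner \(2\) has speed \(1\) and threshold \(d_2\). By symmetry \(t\mapsto -t\) we may take \(t\in[0,1/2]\), and then \(\|t\|\geq d_2\) forces \(t\geq d_2\).
\begin{itemize}
\item If \(t\in[1/4,1/2]\), then \(\|2t\|=1-2t\). This is at least \(d_1\) only when \(t\leq (1-d_1)/2\). Combined with \(t\geq d_2\), a good time in this range exists iff \(d_2\leq (1-d_1)/2\), that is, \(d_1+2d_2\leq 1\).
\item If \(d_2\leq 1/4\), there may also be a good time in \([d_2,1/4]\). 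But in that case \(d_1+2d_2\leq 1/2+1/2=1\) holds automatically.
\end{itemize}
So \(\MLRC(d_1,d_2)\) for the speeds \((2,1)\) is equivalent to \(d_1+2d_2\leq 1\). Swapping the speeds to \((1,2)\) gives \(2d_1+d_2\leq 1\). Both conditions are therefore necessary.

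\emph{Sufficiency, reductions.} Fix distinct speeds \(a=s_1\), \(b=s_2\), and \(\mathbf d\) in the claimed polygon. Rescaling time by \(\gcd(a,b)\), we may assume \(\gcd(a,b)=1\).

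\emph{Sufficiency, covering argument.} The set \(\{t:\|at\|\geq d_1\}\) is a union of closed intervals
\[
I_k=\left[\frac{k+1/2}{a}-w_1,\ \frac{k+1/2}{a}+w_1\right],\qquad w_1=\frac{1/2-d_1}{a}.
\]
Under \(t\mapsto bt\bmod 1\), each \(I_k\) maps onto an arc of length \(b(1-2d_1)/a\) centred at \(c_k=b(2k+1)/(2a)\bmod 1\). The unsafe arc \(\{x:\|x\|<d_2\}\) is open of length \(2d_2\). Hence if \(b(1-2d_1)\geq 2ad_2\), every \(I_k\) already contains a good time. Symmetrically, if \(a(1-2d_2)\geq 2bd_1\), we are done.

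It remains to handle the case where both inequalities fail. Here I will use the distribution of the centres \(c_k\). Since \(\gcd(a,b)=1\), as \(k\) ranges over \(\ZZ/a\), the points \(c_k\) are the odd multiples of \(1/(2a)\) (when \(b\) is odd), or the appropriate coset (when \(b\) is even). In either case one can choose \(k\) with
\[
\|c_k\|\geq \frac12-\frac1{2a}.
\]
Moving within \(I_k\) toward \(1/2\) then gives a time with
\[
\|bt\|\geq \min\!\left(\frac12,\ \frac12-\frac1{2a}+\frac{b(1/2-d_1)}{a}\right).
\]
The goal is to show this is at least \(d_2\) whenever \(2d_1+d_2\leq1\) and \(d_1+2d_2\leq1\), using the symmetric estimate with the roles of \(a\) and \(b\) swapped when needed.

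\emph{Expected obstacle.} The hard step is this final inequality chase when both direct covering conditions fail. It requires a case split on which of \(a,b\) is larger, on their parities, and on the small values \(a\in\{1,2\}\). I expect the extremal cases to be exactly \(\{a,b\}=\{1,2\}\), where the constraints are tight.

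If the centre-distribution estimate is too lossy in some range, the fallback is Lemma~\ref{lem:two-safe-integral}. Since \(B_2\) takes values in \([-1/12,1/6]\), the measure of good times is at least
\[
(1-2d_1)(1-2d_2)-\frac{1}{4ab}.
\]
This is positive once \(ab\) is large relative to the constraint slack, which leaves only finitely many small pairs \((a,b)\) to check by hand.
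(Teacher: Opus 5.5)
Your necessity argument is correct. It uses the same ordered pairs \((1,2)\) and \((2,1)\) as the paper, computed directly rather than through interval overlap.

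For sufficiency, your centre-distribution step is the paper's argument in different coordinates. A centre \(c_k\) with \(\|c_k\|\geq \frac12-\frac1{2a}\) is the same thing as a centre of an \(a\)-safe interval lying within \(\frac1{2ab}\) of a centre of a \(b\)-safe interval. The condition you need is \(\frac12-\frac1{2a}+\frac{b(1/2-d_1)}{a}\geq d_2\). After multiplying by \(2a\) it becomes
\[
a(1-2d_2)+b(1-2d_1)\geq 1,
\]
which is exactly the paper's overlap inequality.

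The gap is that you stop there. You never prove this inequality; you call it the hard step and plan a case split on sizes, parities and small values, with the integral lemma as a fallback. In fact it is immediate and holds unconditionally, so your preliminary direct-covering cases and the ``both fail'' split are also unnecessary. The left side is linear in \((d_1,d_2)\), and the region is the quadrilateral with vertices \((0,0),(\frac12,0),(0,\frac12),(\frac13,\frac13)\). The values at these vertices are \(a+b\), \(a\), \(b\) and \(\frac{a+b}{3}\). All are at least \(1\), because the speeds remain distinct after dividing out the gcd, so \(a+b\geq 3\). With that line added, your proof is complete and coincides with the paper's.

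The fallback would not have closed the argument on its own, for two reasons:
\begin{itemize}
\item The closed upper boundary belongs to \(\MLPS_2\), and no measure-based certificate can reach it. On the edge \(2d_1+d_2=1\), the speeds \((1,2)\) are simultaneously safe only at isolated times, so the integral in Lemma~\ref{lem:two-safe-integral} is \(0\).
\item Near \((\frac12,0)\) and \((0,\frac12)\) the main term \((1-2d_1)(1-2d_2)\) tends to \(0\). So the bound \((1-2d_1)(1-2d_2)-\frac{1}{4ab}>0\) does not leave a finite list of pairs \((a,b)\) that works uniformly over the polygon.
\end{itemize}
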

\begin{proof}
We first prove the inclusion
\[
    \left\{
        (d_1,d_2)\in(0,1/2]^2:
        2d_1+d_2\leq1,\;
        d_1+2d_2\leq1
    \right\}
    \subseteq \MLPS_2.
\]

Let \(s_1,s_2\) be distinct positive integers.  Dividing both speeds by a common factor is equivalent to speeding up or slowing down time, so we may assume without loss of generality that
\[
    \gcd(s_1,s_2)=1.
\]

We now look at the times at which each runner is at position \(1/2\) on the track.  These are the centers of the safe intervals.  For speed \(s_1\), the centers are
\[
    \frac{\ZZ+1/2}{s_1},
\]
and for speed \(s_2\), the centers are
\[
    \frac{\ZZ+1/2}{s_2}.
\]

We claim that some center from the first set and some center from the second set have distance at most
\[
    \frac{1}{2s_1s_2}.
\]
To see this, write the two center sets with a common denominator:
\[
    \frac{\ZZ+1/2}{s_1}
    =
    \frac{1}{2s_1s_2}(2s_2\ZZ+s_2)
\]
and
\[
    \frac{\ZZ+1/2}{s_2}
    =
    \frac{1}{2s_1s_2}(2s_1\ZZ+s_1).
\]
Thus the possible signed differences between centers are
\[
    \frac{1}{2s_1s_2}
    \left(2(s_2z_2-s_1z_1)+s_2-s_1\right),
    \qquad z_1,z_2\in\ZZ.
\]
Since \(\gcd(s_1,s_2)=1\), the integer \(s_2z_2-s_1z_1\) can be chosen arbitrarily.  Hence the possible signed differences are
\[
    \frac{1}{2s_1s_2}(2\ZZ+s_2-s_1).
\]

If \(s_1\) and \(s_2\) are both odd, then \(1/2\) is a center for both families, so the two families have a common center.  Otherwise, since
\(\gcd(s_1,s_2)=1\), one of \(s_1,s_2\) is even and the other is odd.  Hence \(s_2-s_1\) is odd, and so
\[
    2\ZZ+s_2-s_1=2\ZZ+1.
\]
In particular, both $1$ and $-1$ occur as signed distances.  Thus some \(s_1\) safe center and some \(s_2\) safe center are at
 distance \(1/(2s_1s_2)\).

Now we will compare this distance between centers with the radii of the safe
intervals.  For speed \(s_1\), each safe interval has radius
\[
    \frac{1/2-d_1}{s_1}
    =
    \frac{1-2d_1}{2s_1}.
\]
For speed \(s_2\), each safe interval has radius
\[
    \frac{1/2-d_2}{s_2}
    =
    \frac{1-2d_2}{2s_2}.
\]
Therefore the two safe intervals overlap if
\[
    \frac{1-2d_1}{2s_1}
    +
    \frac{1-2d_2}{2s_2}
    \geq
    \frac{1}{2s_1s_2}.
\]
Multiplying by \(2s_1s_2\), this is equivalent to
\[
    s_2(1-2d_1)+s_1(1-2d_2)\geq1.
\]

We now verify this inequality throughout the claimed region for $\MLPS_2$.  The hypotheses
\[
    0\leq d_i\leq\frac12,
    \qquad
    2d_1+d_2\leq1,
    \qquad
    d_1+2d_2\leq1
\]
say that \((d_1,d_2)\) lies in the convex hull of
\[
    (0,0),\qquad
    \left(\frac12,0\right),\qquad
    \left(0,\frac12\right),\qquad
    \left(\frac13,\frac13\right).
\]
The function
\[
    s_2(1-2d_1)+s_1(1-2d_2)
\]
is linear in \((d_1,d_2)\), so its minimum on this polygon occurs at one of the vertices.  At the four vertices, its values are
\[
\begin{array}{c|c}
(d_1,d_2) & s_2(1-2d_1)+s_1(1-2d_2)\\
\hline
(0,0) & s_1+s_2\\[3pt]
(1/2,0) & s_1\\[3pt]
(0,1/2) & s_2\\[3pt]
(1/3,1/3) & \dfrac{s_1+s_2}{3}.
\end{array}
\]
Each of these values is at least \(1\).  The last one is at least \(1\)
because \(s_1\) and \(s_2\) are distinct positive integers, so $s_1+s_2\geq3$.
Hence
\[
    s_2(1-2d_1)+s_1(1-2d_2)\geq1
\]
throughout the whole region.  Thus the two safe intervals overlap, meaning there is some time $t$ when $\|s_1t\|\geq d_1$ and $\|s_2t\|\geq d_2$.

This proves the inclusion
\[
    \left\{
        (d_1,d_2)\in(0,1/2]^2:
        2d_1+d_2\leq1,\;
        d_1+2d_2\leq1
    \right\}
    \subseteq \MLPS_2.
\]

It remains to prove there are no other points in $\MLPS_2$.  Consider the speeds
\(s_1=1,s_2=2\).  On the interval \([0,1)\), speed \(s_1\) has one safe
interval, centered at \(t=1/2\), with radius
\[
    \frac12-d_1.
\]
Speed \(s_2\) has two safe intervals, centered at \(t=1/4\) and \(t=3/4\),
each with radius
\[
    \frac14-\frac{d_2}{2}.
\]
The distance from the \(s_1\) safe center to each of the two \(s_2\) safe centers is \(1/4\).  Since there are only these intervals, a simultaneous safe time can exist only if one of the \(s_2\) safe intervals
overlaps the \(s_1\) safe interval.  Using the radii, this requires
\[
    \left(\frac12-d_1\right)
    +
    \left(\frac14-\frac{d_2}{2}\right)
    \geq
    \frac14.
\]
Equivalently,
\[
    2d_1+d_2\leq1.
\]
Thus, if \(2d_1+d_2>1\), the speeds $s_1=1,s_2=2$ have no simultaneous
safe time, so \((d_1,d_2)\notin\MLPS_2\).

Similarly, applying the same argument to the speeds
\(s_1=2,s_2=1\) shows that membership in \(\MLPS_2\) also requires
\[
    d_1+2d_2\leq1.
\]
\end{proof}

\section{Open questions and future work}
\label{sec:future-work}

The first question is about products of the safe indicator functions.  It is not difficult to show that a finite product of safe functions with integer-valued speeds can be expressed using sums and differences of safe functions with speed 1 and different thresholds, except at the switching points.

For example, one can verify from manually taking intersections of the safe intervals that for all $t$ except for the switching points,
\[
\safe\left(2,\frac{1}{4},t\right)\cdot \safe\left(4,\frac{1}{4},t\right)\cdot \safe\left(3,\frac{1}{8},t\right)=\safe\left(1,\frac{1}{8},t\right)-\safe\left(1,\frac{3}{16},t\right).
\]

More generally, suppose \(f:[0,1)\to\{0,1\}\) is symmetric under
\(t\mapsto 1-t\), $f(0)=0$, and $f$ has finitely many switching points.
Let
\[
    0<\delta_1<\delta_2<\cdots<\delta_r\leq \frac12
\]
be the switching points in \((0,1/2]\), listed in increasing order.  Then at any time except possibly the
switching points,
\[
    f(t)
    =
    \safe(1,\delta_1,t)
    -
    \safe(1,\delta_2,t)
    +
    \safe(1,\delta_3,t)
    -
    \cdots
    +
    (-1)^{r+1}\safe(1,\delta_r,t).
\]
Products of safe functions have this form, since they are finite intersections of finite unions of intervals.

Of course, if we had a way to find all the times when a product of safe functions switches values, we would have already resolved the LRC.  Nevertheless, this sort of approach might have some utility, perhaps in finding bounds.

\begin{question}
Can we find useful bounds or expressions for products of safe
functions in terms of alternating sums of safe functions?
\end{question}

The two-dimensional parameter space \(\MLPS_2\) is a four-sided polygon cut out by the inequalities
\[
    d_1>0,
    \qquad
    d_2>0,
    \qquad
    2d_1+d_2\leq 1,
    \qquad
    d_1+2d_2\leq 1.
\]
Although two of its sides are open, it is nevertheless convex. It is natural to ask whether similar characterizations are true in higher dimensions.  The first basic structural question is whether the mixed parameter spaces have any convexity property.

One reason for excluding points with \(d_i=0\) from the parameter spaces was because we already know there are counterexamples to convexity if we include those points.  For example, we know
\[
    \left(\frac14,\frac14,\frac14\right)\in\MLPS_3,
\]
and suppose that we allow
\[
    \left(\frac13,\frac13,0\right)\in\MLPS_3.
\]
Then the line segment connecting those two points is
\[
\left\{
    \tau\left(\frac14,\frac14,\frac14\right)
    +(1-\tau)\left(\frac13,\frac13,0\right)
    :
    \tau\in[0,1]
\right\}.
\]
For points with
\[
    \frac47<\tau<1,
\]
the speed set \((1,3,2)\) suffices to show those points are not in \(\MLPS_3\), which can be checked by graphing the corresponding safe functions.

Nevertheless, it is still possible that after exclusion of points in the coordinate planes, $\MLPS_3$ could be convex, or a union of convex sets.  The author has not yet found any counterexamples.

A possible counterexample to convexity of $\MLPS_3$ might be the following.  The author has computational evidence, but not a complete proof, that $\left(\frac{7}{20},\frac{1}{5},\frac{13}{90}\right)$ and $\left(\frac{1}{20},\frac{1}{3},\frac{3}{10}\right)$ are likely in $\MLPS_3$.  Their midpoint, $\left(\frac{1}{5},\frac{4}{15},\frac{2}{9}\right)$, however, is not in $\MLPS_3$, which can be shown using the speed set $(2,1,3)$.

\begin{question}
For \(k\geq 3\), off of the coordinate planes, is \(\MLPS_k\) convex, or a finite union of convex sets?
\end{question}

The spaces \(\MLPS_k\) are coordinatewise downward closed, so in order to characterize them, we need only characterize their upper boundary.  It is clear that if the diagonal point
\[
    \left(\frac{1}{k+1},\ldots,\frac{1}{k+1}\right)
\]
is in the parameter space, then it is on the upper boundary, since it is impossible to increase any of its coordinates without leaving the space.

However, it is not clear whether it is in the convex hull of any other boundary points.

\begin{question}
Can the classical LRC point
\[
    \left(\frac{1}{k+1},\ldots,\frac{1}{k+1}\right)
\]
lie in the convex hull of other boundary points of \(\MLPS_k\)?
\end{question}

Another question concerns the obstructions that define these spaces.  In dimension two, the whole space is constrained by the two ordered speed pairs
\((1,2)\) and \((2,1)\).  It would be useful to know whether higher-dimensional spaces are similarly governed by a finite number of speed sets.  If true, this would also imply that the parameter spaces are finite unions of convex sets.

\begin{question}
For fixed \(k\), is \(\MLPS_k\) determined by finitely many speed sets?
\end{question}

There is a shifted version of the Lonely Runner Conjecture, in which the runners may have arbitrary initial positions; see, for example, Alc{\'a}ntara, Criado, and Santos \cite{AlcantaraCriadoSantosShiftedLRC}.  It would be interesting to combine shifted and mixed LRC.  Given that the projection of an orthotope is also a zonotope, it seems plausible that the zonotopal covering-radius methods used for the shifted conjecture could be adapted tov study mixed shifted parameter spaces.

\begin{question}
Can the zonotopal covering-radius methods used for shifted lonely runner
problems be adapted to mixed thresholds?
\end{question}

Finally, Kravitz introduced a loneliness spectrum conjecture
\cite{KravitzBarelyVeryLonely}, and Giri and Kravitz later studied lonely runner spectra as objects in their own right \cite{GiriKravitzLonelyRunnerSpectra}. Mixed loneliness suggests a higher-dimensional version of spectra.

\begin{question}
Can the mixed parameter spaces \(\MLPS_k\) be described in terms of suitable
mixed spectra of speed sets?
\end{question}

\printbibliography

\end{document}